\newcommand{\abs}[1]{\vert #1 \vert}
\DeclareMathOperator{\vol}{Vol}
\DeclareMathOperator{\tr}{trace}
\DeclareMathOperator{\di}{div}
\DeclareMathOperator{\rank}{rank}
\DeclareMathOperator{\gr}{grad}
\DeclareMathOperator{\Ric}{Ric}
\newcommand{\dif}{\mathrm{d}}
\newcommand{\CC}{\mathbb{C}}
\newcommand{\RR}{\mathbb{R}}
\newcommand{\Ss}{\mathbb{S}}
\newtheorem{pr}{Proposition}
\newtheorem{lm}{Lemma}
\theoremstyle{definition}
\newtheorem{re}{Remark}
\begin{document}

\title[Stability of Skyrme-related energies]{On a stability property of Skyrme-related energy functionals}

\author{Radu Slobodeanu}

\address{Department of Theoretical Physics and Mathematics, Faculty of Physics, University of Bucharest, P.O. Box Mg-11, RO--077125 Bucharest-M\u agurele, Romania \\  $\text{and}$}

\address{Institute of Mathematics, University of Neuch\^atel, 11 rue Emile Argand, 2000 Neuch\^atel, Switzerland.}

\email{radualexandru.slobodeanu@g.unibuc.ro}

\subjclass[2010]{58E20, 53C43, 58E30, 53B50.}

\date{\today}

\keywords{Harmonic map, calculus of variations, stability.}

\begin{abstract}
\noindent
We study the stability of critical maps from (or into) spheres with respect to the symplectic Dirichlet and $\sigma_2$ energies which are the fourth power terms in Skyrme type sigma-models.
\end{abstract}

\maketitle

\section{Introduction}
Given two Riemannian manifolds $(M, g)$ and $(N,h)$, the solutions  $\varphi: M \to N$ of the variational problem associated to the \textit{Dirichlet energy}
\begin{equation*}
\mathcal{E}(\varphi) =
\frac{1}{2} \int_M \abs{\dif \varphi}^2 \nu_{g}.
\end{equation*}
are called \textit{harmonic maps}. Derrick's \textit{scaling argument} \cite{der} implies that, in any dimension other than $m = 2$, there are no non-constant finite-energy harmonic maps defined on $\RR^m$. 
This feature is not entirely specific to the euclidean metric, as proved by Sealey \cite{sea}. If $\mathcal{E}(\varphi)$ is allowed to contain a potential term $V \circ \varphi$, then there are no non-trivial solutions if $m > 2$. A counterpart for compact domains is Xin's theorem \cite{xin} asserting that if $m > 2$, then there is no non-constant \textit{stable} harmonic map defined on the unit sphere $\Ss^m$. A mirror result by Leung \cite{leu} holds for mapping taking values into the sphere. Both facts are proved using the Lawson-Simons \textit{averaging argument} \cite{ls}.

Derrick and Xin-Leung theorems can be evaded by considering higher power energies, the price to be paid consisting in restrictions on the ellipticity of the corresponding Euler-Lagrange equations.
For instance, if we consider consider the $p$-\textit{energy} ($p>2$)
\begin{equation*}
\mathcal{E}_p(\varphi) =
\frac{1}{p} \int_M \abs{\dif \varphi}^p \nu_g ,
\end{equation*}
both restrictions are relaxed to $m > p$. Moreover, the Hopf map from $\Ss^3$ to $\Ss^2$ minimizes the $p$-energy in its homotopy class for $p\geq 4$ \cite{riv}.

Another natural choice of high power functional was introduced in the seminal paper on harmonic maps \cite{eell} as
\begin{equation*}
\mathcal{E}_{\sigma_p}(\varphi)=
\frac{1}{2} \int_{M} \abs{\wedge^p \dif \varphi}^2 \nu_g,
\end{equation*}
and was called $\sigma_p$-\textit{energy} since the integrand can be also seen as $\sigma_p(\varphi^*h)$, the $p^{th}$ elementary symmetric function of the eigenvalues of $\varphi^* h$ with respect to $g$.  The fourth power case, $\mathcal{E}_{\sigma_2}(\varphi)$, was already known as the self-interaction term of Skyrme's sigma-model \cite{sky} in nuclear physics.

Motivated by the strong coupling limit of Faddeev-Niemi model \cite{fad},  Speight and Svensson \cite{sve, svee} studied the \textit{symplectic Dirichlet energy}:
\begin{equation*} 
\mathcal{F}(\varphi) = \frac{1}{2} 
\int_M \abs{\varphi^* \Omega}^2 \nu_g,
\end{equation*}
suited for maps taking values in a symplectic manifold $(N, \Omega)$. 

While Derrick's result extends immediately to these alternative energy functionals asserting the non-existence of non-trivial  finite energy solutions in dimensions above the highest degree of derivatives appearing in the integrand, Xin-Leung restriction needs a more elaborate case-by-case analysis.
This has already been done for the Yang-Mills energy (of instantons) \cite{bourg}, for the volume functional (of immersions) \cite{ls, sim}, for the $p$-energy \cite{leu, take} and for the $L^2$ norm of the pullback metric \cite{kaw}. In this short note we complete the picture for the fourth power energies by proving analogue results for $\mathcal{E}_{\sigma_2}$ and $\mathcal{F}$, and by pointing out their global counterpart. This allows us to derive stability properties also for the case when we couple each of these two functionals with the Dirichlet energy, as it is usually done in the original sigma-models.

\medskip
\begin{small}
Throughout the paper, manifolds, metrics, and maps are assumed to be smooth. On a  connected Riemannian manifold $(M,g)$ with Levi-Civita connection $\nabla$, we use the following sign conventions for the curvature tensor field $R(X,Y)Z=\nabla_X\nabla_Y Z-\nabla_Y \nabla_X Z-\nabla_{[X,Y]}Z$, and $\Delta f = \tr \nabla \dif f$ for the Laplacian on functions. 
\end{small} 

\section{Symplectic Dirichlet stability on spheres}

Let $(M,g)$ and $(N,J,h)$ be Riemannian manifolds, the second being endowed with an almost K\"ahler structure with the fundamental 2-form $\Omega(\cdot,\cdot)=h(\cdot,J\cdot)$. A map $\varphi:M \to N$ is $\mathcal{F}$-critical if the first variation of $\mathcal{F}$ at $\varphi$ vanishes, and this is proved (\cite{sve}) to be equivalent with the Euler-Lagrange equations
\begin{equation}\label{fh}
\dif \varphi\left((\delta \varphi^* \Omega)^{\sharp}\right)=0.
\end{equation}
A vacuum solution (i.e. $\varphi^* \Omega=0$) is called \textit{isotropic}. A critical map is moreover a local minimizer (stable critical point) if the second variation of the energy (the Hessian) evaluated at this map is positive definite.
For any $v \in \Gamma(\varphi^{-1}TN)$, and any $\mathcal{F}$-critical map $\varphi$, the Hessian of $\mathcal{F}$ can be calculated as (\cite{sve})
\begin{equation}\label{fhess}
\mathrm{Hess}_{\varphi}^{\mathcal{F}}(v,v)= \int _M \{\abs{\dif (\varphi^* \imath _v \Omega)}^{2} + \Omega(v, \nabla^{\varphi}_{Z_\varphi} v)\} \nu_g,
\end{equation}
where $Z_\varphi=(\delta \varphi^* \Omega)^{\sharp}$. In particular, if $\delta \varphi^* \Omega=0$, then we see that $\varphi$ is stable (it actually minimizes $\mathcal{F}$ in its homotopy class \cite{svee}). 

\begin{lm}[\cite{slobo}]\label{magic}
Let $\varphi:(M,g) \to (N,h)$ be a mapping between Riemannian manifolds. Then for any $X,Y,Z \in \Gamma(TM)$ we have
\begin{equation*}
\left( \nabla_X \varphi^* h \right)(Y, Z)= h(\nabla \dif \varphi(X, Y), \dif \varphi(Z)) + h(\dif \varphi(Y), \nabla \dif \varphi(X,Z)).
\end{equation*}
\end{lm}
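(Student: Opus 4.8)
To prove the lemma I would simply unwind the definition of the covariant derivative of the symmetric $(0,2)$-tensor $\varphi^* h$ on $M$ and use the compatibility of the induced connection with the metric. Write $\nabla$ for the Levi-Civita connections of both $(M,g)$ and $(N,h)$ (context makes clear which is meant), let $\nabla^{\varphi}$ be the connection on $\varphi^{-1}TN$ pulled back from that of $(N,h)$, and recall the defining identity of the second fundamental form of the map, $\nabla \dif\varphi(X,Y)=\nabla^{\varphi}_X(\dif\varphi(Y))-\dif\varphi(\nabla_X Y)$. The point is that $\nabla^{\varphi}$ is metric with respect to (the pullback of) $h$, i.e. $X\big(h(V,W)\big)=h(\nabla^{\varphi}_X V,W)+h(V,\nabla^{\varphi}_X W)$ for all $V,W\in\Gamma(\varphi^{-1}TN)$; this is immediate from the fact that $\nabla^{\varphi}$ is pulled back from a metric connection.

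The computation then goes as follows. First, by definition of the covariant derivative of a $(0,2)$-tensor,
\[
(\nabla_X \varphi^* h)(Y,Z)=X\big(\varphi^* h(Y,Z)\big)-\varphi^* h(\nabla_X Y,Z)-\varphi^* h(Y,\nabla_X Z).
\]
Applying metric compatibility of $\nabla^{\varphi}$ to $V=\dif\varphi(Y)$, $W=\dif\varphi(Z)$ gives
\[
X\big(\varphi^* h(Y,Z)\big)=h\big(\nabla^{\varphi}_X \dif\varphi(Y),\dif\varphi(Z)\big)+h\big(\dif\varphi(Y),\nabla^{\varphi}_X \dif\varphi(Z)\big).
\]
Substituting $\nabla^{\varphi}_X \dif\varphi(Y)=\nabla\dif\varphi(X,Y)+\dif\varphi(\nabla_X Y)$ and the analogous expression with $Z$, and expanding by bilinearity of $h$, the two extra terms $h(\dif\varphi(\nabla_X Y),\dif\varphi(Z))=\varphi^* h(\nabla_X Y,Z)$ and $h(\dif\varphi(Y),\dif\varphi(\nabla_X Z))=\varphi^* h(Y,\nabla_X Z)$ cancel exactly against the last two terms of the first display, leaving precisely the asserted formula.

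This is a purely formal calculation, so there is no real obstacle; the only thing demanding care is bookkeeping the three connections involved — the Levi-Civita connections of $M$ and $N$ and the induced connection $\nabla^{\varphi}$ — and the verification of the metric-compatibility statement used in the second step. Alternatively, one can shorten the bookkeeping by checking the identity pointwise at an arbitrary $p\in M$ in a geodesic frame, which makes $\nabla_X Y$ and $\nabla_X Z$ vanish at $p$ and reduces the claim to differentiating $h(\dif\varphi(Y),\dif\varphi(Z))$ and using $\nabla^{\varphi}_X\dif\varphi(Y)=\nabla\dif\varphi(X,Y)$ at $p$.
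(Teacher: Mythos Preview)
Your proof is correct and is exactly the standard unwinding one would expect: apply the definition of $\nabla_X(\varphi^*h)$, use metric compatibility of the pullback connection $\nabla^\varphi$, and substitute the definition of the second fundamental form $\nabla\dif\varphi$. The paper does not include a proof of this lemma at all (it is merely quoted from \cite{slobo}), so there is nothing to compare against; your argument is the natural one.
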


\begin{re}[Averaging argument]\label{av} The method introduced in \cite{ls} in order to find necessary conditions for stability on/into spheres  consists in averaging the second variation of the respective energy functional on a particular family of gradient conformal vector fields.
Let $(a_\alpha)_{\alpha=1,...,m+1}$ be an orthonormal basis in $\RR^{m+1}$. Define $f_\alpha : \Ss^{m} \to \RR$, $f_\alpha(x)=\langle a_\alpha, x\rangle$ and take $\gr f_\alpha \in \Gamma(T\Ss^{m})$.
We have $(\gr f_\alpha)_ {x} = a_\alpha - f_\alpha(x) x$, $\abs{\gr f_\alpha}^2=1 - f_\alpha^2$ and
\begin{equation}\label{nabgr}
\nabla_X \gr f_\alpha=-f_\alpha X \qquad (X \in \Gamma(T\Ss^{m})),
\end{equation}
where $\nabla$ is the Levi-Civita connection of the canonical metric $g$ on $\Ss^{m}$. In particular, $f_\alpha$ are eigenfunctions of the Laplace operator 
corresponding to the first non-zero eigenvalue: $\Delta f_\alpha=-m f_\alpha$. It is immediate to see that $\sum_\alpha f_\alpha^2=1$ (so $\sum_\alpha f_\alpha \gr f_\alpha=0$), and that, for any $X \in \Gamma(T\Ss^{m})$, $X=\sum_\alpha g(X, \gr f_\alpha)\gr f_\alpha$. 
\end{re}

\begin{pr}
If $m > 4$ there is no non-isotropic stable $\mathcal{F}$-critical map from $\Ss^m$ to any almost K\"ahler manifold.
\end{pr}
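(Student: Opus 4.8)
The plan is to run the Lawson--Simons averaging of Remark \ref{av} on the second variation \eqref{fhess}, taking as test sections $v_\alpha=\dif\varphi(\gr f_\alpha)$, $\alpha=1,\dots,m+1$. A non-isotropic map has $\varphi^*\Omega\not\equiv 0$, hence $\dif\varphi\not\equiv 0$ and the $v_\alpha$ are not all zero, so it is enough to show $\sum_\alpha\Hess_{\varphi}^{\FF}(v_\alpha,v_\alpha)<0$: one summand is then negative, contradicting stability. Throughout put $\omega:=\varphi^*\Omega$; since the target is almost K\"ahler, $\dif\Omega=0$, so $\omega$ is a \emph{closed} $2$-form on $\Ss^m$ --- and this is the only place the almost K\"ahler hypothesis enters.

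For the first term of \eqref{fhess} I would use $\varphi^*(\imath_{v_\alpha}\Omega)=\imath_{\gr f_\alpha}\omega$, together with Cartan's formula and $\dif\omega=0$, to get $\dif(\imath_{\gr f_\alpha}\omega)=\mathcal{L}_{\gr f_\alpha}\omega$, which by \eqref{nabgr} equals $\nabla_{\gr f_\alpha}\omega-2f_\alpha\,\omega$. Expanding $\abs{\nabla_{\gr f_\alpha}\omega-2f_\alpha\omega}^2$ and summing over $\alpha$ with the identities $\sum_\alpha f_\alpha^2=1$, $\sum_\alpha f_\alpha\gr f_\alpha=0$ and $X=\sum_\alpha g(X,\gr f_\alpha)\gr f_\alpha$ of Remark \ref{av} kills the cross term and gives $\sum_\alpha\abs{\dif(\varphi^*\imath_{v_\alpha}\Omega)}^2=\abs{\nabla\omega}^2+4\abs{\omega}^2$.

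The second term is the delicate one, and it is here that the Euler--Lagrange equation \eqref{fh} is needed: $\dif\varphi(Z_\varphi)=0$, which in particular forces $\imath_{Z_\varphi}\omega=0$. From $\nabla^{\varphi}_X\dif\varphi(W)=\nabla\dif\varphi(X,W)+\dif\varphi(\nabla_X W)$ and $\nabla_{Z_\varphi}\gr f_\alpha=-f_\alpha Z_\varphi$ the term $\dif\varphi(\nabla_{Z_\varphi}\gr f_\alpha)$ drops; then the symmetry of $\nabla\dif\varphi$ with $\dif\varphi(Z_\varphi)=0$ gives $\nabla\dif\varphi(Z_\varphi,\gr f_\alpha)=-\dif\varphi(\nabla_{\gr f_\alpha}Z_\varphi)$, so that $\Omega(v_\alpha,\nabla^{\varphi}_{Z_\varphi}v_\alpha)=-\omega(\gr f_\alpha,\nabla_{\gr f_\alpha}Z_\varphi)$. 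Differentiating $\imath_{Z_\varphi}\omega=0$ turns the right-hand side into $-(\nabla_{\gr f_\alpha}\omega)(Z_\varphi,\gr f_\alpha)$, and summing over $\alpha$ (again via $X=\sum_\alpha g(X,\gr f_\alpha)\gr f_\alpha$, then recognising the codifferential of a $2$-form) yields $\sum_\alpha\Omega(v_\alpha,\nabla^{\varphi}_{Z_\varphi}v_\alpha)=-\abs{Z_\varphi}^2=-\abs{\delta\omega}^2$. Combining,
\begin{equation*}
\sum_\alpha\Hess_{\varphi}^{\FF}(v_\alpha,v_\alpha)=\int_{\Ss^m}\bigl(\abs{\nabla\omega}^2+4\abs{\omega}^2-\abs{\delta\omega}^2\bigr)\nu_g .
\end{equation*}

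To close the argument I would use the Weitzenb\"ock formula for the closed $2$-form $\omega$ on the unit sphere: the Weitzenb\"ock curvature operator on $2$-forms of $\Ss^m$ equals $2(m-2)\,\mathrm{Id}$, so $(\dif\delta+\delta\dif)\omega=\nabla^{*}\nabla\omega+2(m-2)\,\omega$; pairing with $\omega$, integrating, and using $\dif\omega=0$ gives $\int_{\Ss^m}\abs{\delta\omega}^2\nu_g=\int_{\Ss^m}\abs{\nabla\omega}^2\nu_g+2(m-2)\int_{\Ss^m}\abs{\omega}^2\nu_g$. Substituting, $\sum_\alpha\Hess_{\varphi}^{\FF}(v_\alpha,v_\alpha)=-2(m-4)\int_{\Ss^m}\abs{\varphi^*\Omega}^2\nu_g$, which is strictly negative for $m>4$ on a non-isotropic map, contradicting stability. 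The point worth underlining is that everything above involves only the intrinsic closed $2$-form $\varphi^*\Omega$ on $\Ss^m$, so no use is made of the covariant derivative of $\Omega$ on $N$ --- which is precisely why almost K\"ahler (rather than K\"ahler) targets suffice. One can also avoid the Weitzenb\"ock step: $v_\alpha$ generates the one-parameter group $\psi^\alpha_t$ of conformal diffeomorphisms of $\Ss^m$ integrating $\gr f_\alpha$, a change of variables gives $\FF(\varphi\circ\psi^\alpha_t)=\tfrac12\int_{\Ss^m}\mu_t^{m-4}\abs{\varphi^*\Omega}^2\nu_g$ for the appropriate conformal factor $\mu_t$, and reading off the order-$t^2$ coefficient after averaging recovers the same conclusion as a Derrick-type rescaling.
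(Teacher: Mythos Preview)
Your proof is correct and follows essentially the same route as the paper: the averaging over $v_\alpha=\dif\varphi(\gr f_\alpha)$, the identifications $\sum_\alpha\abs{\dif(\varphi^*\imath_{v_\alpha}\Omega)}^2=\abs{\nabla\varphi^*\Omega}^2+4\abs{\varphi^*\Omega}^2$ and $\sum_\alpha\Omega(v_\alpha,\nabla^\varphi_{Z_\varphi}v_\alpha)=-\abs{\delta\varphi^*\Omega}^2$, and the Weitzenb\"ock formula for closed $2$-forms on $\Ss^m$ are exactly the paper's ingredients. The only cosmetic differences are that you reach the first identity via Cartan's formula rather than Lemma~\ref{magic}, and that you append an alternative conformal-rescaling argument which the paper does not include.
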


\begin{proof}
Let $\varphi:\mathbb{S}^{m} \to (N, \Omega)$ be a smooth $\mathcal{F}$-critical map and $v_\alpha=\dif \varphi(\gr f_\alpha) \in \Gamma(\varphi^{-1}TN)$, $\alpha=1,...,m+1$ be defined using Remark \ref{av}.

Observe that $\nabla^{\varphi}_{V} \dif \varphi(X) = \dif \varphi([V,X])$, for any $V \in \ker \dif \varphi$ and any $X\in\Gamma(TM)$. Since $\varphi$ is $\mathcal{F}$-critical, $Z_\varphi \in \ker \dif \varphi$ and we have
\begin{equation}\label{h1}
\begin{split}
\Omega(v_\alpha, \nabla^{\varphi}_{Z_{\varphi}} v_\alpha)
&= - \varphi^* \Omega([Z_{\varphi}, \gr f_\alpha], \gr f_\alpha)\\
&=\varphi^* \Omega \left(\nabla_{\gr f_\alpha}Z_{\varphi}, \gr f_\alpha\right)\\
&=-\left(\nabla_{\gr f_\alpha} \varphi^* \Omega\right) \left(Z_{\varphi}, \gr f_\alpha\right),
\end{split}
\end{equation}
so by summing over $\alpha$ we obtain 
\begin{equation}\label{h1S}
\sum_\alpha
\Omega(v_\alpha, \nabla^{\varphi}_{Z_{\varphi}} v_\alpha)
= -\abs{\delta \varphi^* \Omega}^2.
\end{equation}

Using Lemma \ref{magic} and Remark \ref{av}, we obtain
\begin{equation}
\dif (\varphi^* \imath _{v_\alpha} \Omega)(X, Y)=
\left(\nabla_{\gr f_\alpha} \varphi^* \Omega\right)(X,Y)
-2f_\alpha \, \varphi^* \Omega(X,Y),
\end{equation}
so by taking the norm and summing over $\alpha$, 
\begin{equation}\label{h2S}
\sum_\alpha\abs{\dif (\varphi^* \imath _{v_\alpha} \Omega)}^{2}=\abs{\nabla \varphi^* \Omega}^2 + 4 \abs{\varphi^* \Omega}^2.
\end{equation}
Combining \eqref{h1S} and \eqref{h2S} we see that calculating the trace of the Hessian requires the following Weitzenb\"ock formula for $p$-forms
(see \cite[(1.32)]{eel} and references therein) 
\begin{equation*}
-\tfrac{1}{2}\Delta\abs{\sigma}^2 
= \langle \Delta \sigma, \sigma\rangle
 - \abs{\nabla \sigma}^2- \langle S(\sigma), \sigma \rangle ,
\end{equation*}
which by integration over a compact manifold without boundary gives:
\begin{equation}\label{weitz}
\int_M \abs{\dif \sigma}^2 + \abs{\delta \sigma}^2 - \abs{\nabla \sigma}^2- \langle S(\sigma), \sigma \rangle =0.
\end{equation}
If $\sigma \in \Lambda^2 (M)$, then the curvature operator $S$ acts as follows
$$
S(\sigma)(X_1, X_2)=\sigma(\Ric X_1, X_2)+ \sigma(X_1, \Ric X_2)
+ \sum_s \sigma(e_s, R(X_1, X_2)e_s).
$$
In particular, for 2-forms on $\Ss^m$ we simply have
$$S(\sigma)(X_1, X_2)=(2m-4)\sigma(X_1, X_2).$$

Applying \eqref{weitz} for the closed 2-form $\varphi^* \Omega$ on $\Ss^m$ we obtain
\begin{equation*}
\begin{split}
\sum_\alpha \mathrm{Hess}_{\varphi}^{\mathcal{F}}(v_\alpha, v_\alpha)&=\int_{\Ss^m} \big\{-\abs{\delta \varphi^* \Omega}^2+\abs{\nabla \varphi^* \Omega}^2 + 4 \abs{\varphi^* \Omega}^2\big\}\nu_{can}\\
&=2(4-m) \int_{\Ss^m}\abs{\varphi^* \Omega}^2 \nu_{can}
\end{split}
\end{equation*}
and the conclusion follows.
\end{proof}

\noindent This generalizes \cite[Prop. 3.4]{slobo} in the case of Hopf maps. Recall that \cite{svee} the Hopf map $\varphi : \Ss^3 \to \CC P^1$ minimizes $\mathcal{F}$ in its homotopy class.

\subsection{Full Faddeev-Niemi model} Let us now turn attention to the coupled energy
$$
\mathcal{E}(\varphi)+ \kappa \mathcal{F}(\varphi),
$$
where $\kappa$ is a positive coupling constant. A mapping $\varphi$ will be a critical point for this action if and only if:
\begin{equation}\label{fhfull}
\tau(\varphi)-\kappa J\dif \varphi\left((\delta \varphi^* \Omega)^{\sharp}\right)=0,
\end{equation}
where $\tau(\varphi)=\tr \nabla \dif \varphi$ is the \textit{tension field} of $\varphi$. Even if the the Hessian of a coupled energy is still a linear combination of the two individual Hessians, combining the averaging arguments requires caution, since in the computation of $\sum_\alpha \mathrm{Hess}_{\varphi}^{\mathcal{E}, \mathcal{F}}(v_\alpha, v_\alpha)$ we employed again the (individual) Euler-Lagrange equations. So by carefully redoing the same steps for the full energy and using this time \eqref{fhfull}, we obtain
\begin{equation*}
\begin{split}
\sum_\alpha \mathrm{Hess}_{\varphi}^{\mathcal{E}+\kappa\mathcal{F}}(v_\alpha, v_\alpha)
&=\int_{\Ss^m}\left\{(2-m)\abs{\dif \varphi}^2 + 2\kappa(4-m)\abs{\varphi^* \Omega}^2 \right\}\nu_{can}. 
\end{split}
\end{equation*}
In particular, if $m\geq 4$, then there is no non-constant stable $(\mathcal{E}+\kappa\mathcal{F})$-critical map from $\Ss^m$ to any almost K\"ahler manifold. If $m=3$,  a necessary condition for a non-isotropic $(\mathcal{E}+\kappa\mathcal{F})$-critical map $\varphi:\Ss^3 \to N^2$ to be stable is
$$
\kappa\geq \frac{\int_{\Ss^3}\abs{\dif \varphi}^2 \nu_{can}}{2\int_{\Ss^3}\abs{\varphi^* \Omega}^2\nu_{can}}
$$
For the Hopf map $\Ss^3 \to  \CC P^1 \cong \Ss^2(\tfrac{1}{2})$ this reads $\kappa \geq 1$ and it is also a sufficient condition, as proved in \cite{sve}.

\section{$\sigma_2$-Stability on spheres}
For any map $\varphi: (M^m,g) \to (N^n,h)$ between Riemannian manifolds of dimensions $m,n \geq 2$, we denote by $\sigma_2(\varphi^* h)=\sum_{i<j}\lambda_i^2\lambda_j^2$ and we call $\sigma_2$-\textit{energy} the action functional $\mathcal{E}_{\sigma_2}(\varphi)=\tfrac{1}{2}\int_M \sigma_2(\varphi^* h) \nu_g$, where $\lambda_i^2$ are the eigenvalues of $\varphi^* h$ with respect to $g$. The corresponding Euler-Lagrange equations are  (\cite{cri}, cf. also \cite{slo}) 
$$
\tr \nabla(\abs{\dif \varphi}^2 \dif \varphi-\dif \varphi \circ \mathfrak{C}_\varphi)=0,
$$
where $\mathfrak{C}_\varphi=\dif \varphi^t \circ \dif \varphi$, the (1,1)-"dual" of $\varphi^* h$, is called the \textit{Cauchy-Green tensor}. 
The second variation formula is given below. Here we shall investigate its behaviour for mappings defined on spheres and we expect to recover the result of the previous section in this case (cf. \cite{sachs} for the identity map). Since $2\sigma_2(\varphi^* h)=\abs{\dif \varphi}^4 - \abs{\varphi^* h}^2$ in order to prove this we would be tempted to simply combine the result in \cite{leu, take}
 \begin{equation*}
\sum_\alpha \mathrm{Hess}_{\varphi}^{\mathcal{E}_4}(v_\alpha, v_\alpha)=(4-n) \abs{\dif \varphi}^4
\end{equation*}
with the corresponding result in \cite{kaw} for $\mathcal{G}(\varphi)=\tfrac{1}{4}\int_M \abs{\varphi^* h}^2 \nu_g$
\begin{equation*}
\sum_\alpha \mathrm{Hess}_{\varphi}^{\mathcal{G}}(v_\alpha, v_\alpha)=(4-n) \abs{\varphi^* h}^2.
\end{equation*}
But, as already mentioned, in the derivation of these "trace" formulae the individual Euler-Lagrange equations have been employed again, so we need to identify the respective terms in order to see that this approach actually gives us the expected result. For the convenience of the reader we present the main lines of the complete proof.

\begin{lm}[The second $\sigma_2$-variation \cite{cri}]\label{sigma_2hess}
The second variation of the $\sigma_2$-energy along  $v\in\Gamma(\varphi^{-1}TN)$ evaluated on a $\sigma_2$-critical map $\varphi$ is  
\begin{equation}\label{sigma_2hes}
\begin{split}
\mathrm{Hess}_{\varphi}^{\mathcal{E}_{\sigma_2}}(v, v)&=\int_{M} \big\{ 2(\di^\varphi v)^2 + \abs{\dif \varphi}^2\left(\abs{\nabla^\varphi v}^2 - \Ric^\varphi(v,v)\right)\big\}\nu_g\\
&-\int_{M} \big\{\tfrac{1}{2}\abs{H_v}^2 + \sum_i\lambda_i^2\left(\abs{\nabla_{e_i}^\varphi v}^2 - \langle R^N(v,\dif \varphi(e_i))\dif \varphi(e_i), v \rangle \right)\big\}\nu_g\\
\end{split}
\end{equation}
where $\{e_i\}_{i=1,...,m}$ is a (local) orthonormal frame of eigenvectors of $\varphi^* h$ on $M$, $H_v(X,Y)=h(\nabla_{X}^\varphi v, \dif \varphi(Y))+h(\nabla_{Y}^\varphi v, \dif \varphi(X))$, for any $X$ and $Y$ tangent vectors to $M$, 
$\di^\varphi v=\tfrac{1}{2}\tr H_v$, and $\Ric^\varphi \left(v,w \right)=$ \\ $\sum_j h\left( R^N(v,\dif \varphi(e_j))\dif \varphi(e_j), w \right)$.
\end{lm}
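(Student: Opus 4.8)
The plan is to compute the second variation directly, exploiting the pointwise identity $2\sigma_2(\varphi^* h)=\abs{\dif\varphi}^4-\abs{\varphi^* h}^2$. This writes $\mathcal{E}_{\sigma_2}=\mathcal{E}_4-\mathcal{G}$, where $\mathcal{E}_4(\varphi)=\tfrac14\int_M\abs{\dif\varphi}^4\nu_g$ and $\mathcal{G}(\varphi)=\tfrac14\int_M\abs{\varphi^* h}^2\nu_g$, so that $\mathrm{Hess}_\varphi^{\mathcal{E}_{\sigma_2}}=\mathrm{Hess}_\varphi^{\mathcal{E}_4}-\mathrm{Hess}_\varphi^{\mathcal{G}}$ and it suffices to treat the two Hessians separately. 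I would fix a smooth variation $\{\varphi_t\}$ of $\varphi=\varphi_0$ with variation field $V_t=\partial_t\varphi_t\in\Gamma(\varphi_t^{-1}TN)$ and $v=V_0$, and observe that, $\varphi$ being $\sigma_2$-critical, the value of $\partial_t^2\big|_{0}\mathcal{E}_{\sigma_2}(\varphi_t)$ is independent of the acceleration $\nabla^{\varphi_t}_{\partial_t}V_t\big|_{0}$; hence I may take the geodesic variation $\varphi_t(x)=\exp^N_{\varphi(x)}(t\,v(x))$, for which $\nabla^{\varphi_t}_{\partial_t}V_t\equiv 0$ along the whole variation. The two facts used throughout are the standard commutation formulas $\nabla^{\varphi_t}_{\partial_t}\dif\varphi_t(X)=\nabla^{\varphi_t}_X V_t$ and $\nabla^{\varphi_t}_{\partial_t}\nabla^{\varphi_t}_X V_t-\nabla^{\varphi_t}_X\nabla^{\varphi_t}_{\partial_t}V_t=R^N(V_t,\dif\varphi_t(X))V_t$, valid for any $t$-independent vector field $X$ on $M$ and with the sign convention for $R^N$ fixed in the introduction.

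For the $\mathcal{E}_4$-term I would write $\abs{\dif\varphi_t}^2=\sum_i h(\dif\varphi_t(e_i),\dif\varphi_t(e_i))$ in a fixed local orthonormal frame $\{e_i\}$ on $M$, differentiate once to get $\partial_t\abs{\dif\varphi_t}^2\big|_{0}=2\di^\varphi v$, and differentiate again, inserting the two commutation formulas together with $\nabla^{\varphi_t}_{\partial_t}V_t=0$ and using the pair symmetry of $R^N$ to recognise a Ricci term, to obtain $\partial_t^2\abs{\dif\varphi_t}^2\big|_{0}=2(\abs{\nabla^\varphi v}^2-\Ric^\varphi(v,v))$. Then $\partial_t^2\abs{\dif\varphi_t}^4\big|_{0}=2(2\di^\varphi v)^2+2\abs{\dif\varphi}^2\cdot 2(\abs{\nabla^\varphi v}^2-\Ric^\varphi(v,v))$, and after multiplying by $\tfrac14$ and integrating this is exactly the first line of \eqref{sigma_2hes}; it is the $p=4$ computation underlying the trace formula of \cite{leu,take}.

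For the $\mathcal{G}$-term I would now take $\{e_i\}$ to be an orthonormal eigenframe of $\varphi^* h$ at $t=0$ and set $a_{ij}(t)=h(\dif\varphi_t(e_i),\dif\varphi_t(e_j))$, so that $\abs{\varphi_t^* h}^2=\sum_{i,j}a_{ij}(t)^2$ and $a_{ij}(0)=\lambda_i^2\delta_{ij}$. The first commutation formula gives $\dot a_{ij}(0)=H_v(e_i,e_j)$, while a second differentiation (again with both commutation formulas and $\nabla^{\varphi_t}_{\partial_t}V_t=0$) gives $\ddot a_{ij}(0)=2h(\nabla^\varphi_{e_i}v,\nabla^\varphi_{e_j}v)+h(R^N(v,\dif\varphi(e_i))v,\dif\varphi(e_j))+h(R^N(v,\dif\varphi(e_j))v,\dif\varphi(e_i))$. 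Hence $\partial_t^2\abs{\varphi_t^* h}^2\big|_{0}=2\sum_{i,j}\dot a_{ij}(0)^2+2\sum_{i,j}a_{ij}(0)\ddot a_{ij}(0)$: the first sum is $2\abs{H_v}^2$, and in the second only the diagonal terms survive because $a_{ij}(0)$ is diagonal, giving $4\sum_i\lambda_i^2(\abs{\nabla^\varphi_{e_i}v}^2-\langle R^N(v,\dif\varphi(e_i))\dif\varphi(e_i),v\rangle)$ after using the pair symmetry of $R^N$ once more. Multiplying by $\tfrac14$ and integrating produces the negative of the second line of \eqref{sigma_2hes}, which is the $\mathcal{G}$-analogue underlying \cite{kaw}. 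Subtracting the two Hessians yields \eqref{sigma_2hes}.

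The one genuinely delicate point is the reduction in the first step — using criticality of $\varphi$ to discard the acceleration term and so work with the convenient geodesic variation. It deserves to be flagged, because this is precisely the term whose role is easy to overlook in the next section, where one substitutes $v=v_\alpha=\dif\varphi(\gr f_\alpha)$ and sums over $\alpha$: there the $\sigma_2$-Euler--Lagrange equation reappears only after an integration by parts, and the term one is entitled to drop here is exactly the term that must then be tracked. Everything else is bookkeeping; the spot that still needs a little care is the $\mathcal{G}$-term, where $\{e_i\}$ diagonalises $\varphi^* h$ only at $t=0$ (for $t\ne 0$ the tensor $\varphi_t^* h$ need not be diagonal in it), so one must differentiate the matrix entries $a_{ij}(t)$ rather than attempt to differentiate ``eigenvalues''.
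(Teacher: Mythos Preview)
Your computation is correct. The paper itself does not supply a proof of this lemma: it is quoted from Wood's thesis \cite{cri} and simply stated. So there is no in-paper argument to compare against, and your derivation via the splitting $\mathcal{E}_{\sigma_2}=\mathcal{E}_4-\mathcal{G}$ (using the identity $2\sigma_2(\varphi^*h)=\abs{\dif\varphi}^4-\abs{\varphi^*h}^2$ that the paper itself invokes in the paragraph following the lemma) is a perfectly good substitute. The two standard commutation formulas and the geodesic-variation reduction are used exactly where they should be, the signs match the paper's curvature convention, and your handling of the $\mathcal{G}$-piece by differentiating the matrix entries $a_{ij}(t)$ rather than the eigenvalues is the right way to avoid the smoothness issue for eigenframes. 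Your closing remark about why the criticality reduction is the step to watch is also on point and anticipates precisely the caution the paper raises in the subsequent trace computation.
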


\noindent As for the symplectic Dirichlet energy, we need a Weitzenb\"ock formula. 

\begin{lm}[Weitzenb\"ock type formula \cite{naka}]\label{boc}
For any smooth map $\varphi:(M,g)\to(N,h)$ the following identy holds
\begin{equation*}
\begin{split}
\frac{1}{4}\Delta\abs{\varphi^* h}^2 =&\frac{1}{2}\abs{\nabla \varphi^* h}^2
+\sum_{i}\lambda_i^2 \abs{\nabla \dif \varphi(e_i, \cdot)}^2 \\
&+\sum_{i}\lambda_i^2 \left[h\left(\dif \varphi(\Ric^M e_i), \dif \varphi(e_i)\right) - \Ric^\varphi \left(\dif \varphi(e_i),\dif \varphi(e_i)\right)\right]\\
&+\di(\mathfrak{C}_\varphi(\dif \varphi^t(\tau(\varphi)))-h(\tau(\varphi), \tr \nabla (\dif \varphi \circ \mathfrak{C}_\varphi)),
\end{split}
\end{equation*}
where $\{e_i\}_{i=1,...,m}$ is a (local) orthonormal frame of eigenvectors for $\varphi^* h$ with respect to $g$, corresponding to the eigenvalues $\{\lambda_i^2\}_{i=1,...,m}$.
\end{lm}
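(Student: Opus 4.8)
The plan is to run the Bochner technique for the symmetric $2$-tensor $\varphi^* h$, in the same spirit as the Eells--Sampson computation of $\Delta\abs{\dif\varphi}^2$ but keeping the eigenvalue weights that $\mathfrak{C}_\varphi$ introduces. Writing $\Delta=\tr\nabla^2$ for the rough Laplacian on tensors (consistent with the sign convention $\Delta f=\tr\nabla\dif f$ on functions), the starting identity is
\[
\tfrac12\Delta\abs{\varphi^* h}^2=\abs{\nabla\varphi^* h}^2+\langle\tr\nabla^2(\varphi^* h),\varphi^* h\rangle,
\]
so everything comes down to expanding $\langle\tr\nabla^2(\varphi^* h),\varphi^* h\rangle$. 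Both covariant derivatives of $\varphi^* h$ are handled by Lemma \ref{magic}: differentiating it once gives $\nabla_X\varphi^* h$ as a symmetrisation of $h(\nabla\dif\varphi(X,\cdot),\dif\varphi(\cdot))$, and differentiating again and tracing the first two slots produces terms containing the third covariant derivative $\nabla^2\dif\varphi$ and terms quadratic in $\nabla\dif\varphi$ coming from the derivative hitting a $\dif\varphi$ factor, and nothing else at this stage. Everything is then paired with $\varphi^* h$; working at a fixed point with a local orthonormal frame $\{e_i\}$ that is both $\nabla$-parallel there and diagonalises $\varphi^* h$ (so $h(\dif\varphi(e_i),\dif\varphi(e_j))=\lambda_i^2\delta_{ij}$) is what makes the weights $\lambda_i^2$ appear and removes the spurious $\nabla e_i$ terms. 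The quadratic-in-$\nabla\dif\varphi$ contributions, together with $\abs{\nabla\varphi^* h}^2$, should collapse after the final normalisation into exactly $\tfrac12\abs{\nabla\varphi^* h}^2+\sum_i\lambda_i^2\abs{\nabla\dif\varphi(e_i,\cdot)}^2$.

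For the third-derivative term I would use the symmetry $\nabla\dif\varphi(X,Y)=\nabla\dif\varphi(Y,X)$ together with the Bianchi-type commutation identity $(\nabla_X\nabla\dif\varphi)(Y,Z)-(\nabla_Y\nabla\dif\varphi)(X,Z)=R^\varphi(X,Y)\dif\varphi(Z)-\dif\varphi(R^M(X,Y)Z)$ to rewrite $\sum_i(\nabla_{e_i}\nabla\dif\varphi)(e_i,\cdot)$ as $\nabla^\varphi\tau(\varphi)$ plus curvature. The curvature part splits, upon weighting by $\lambda_i^2$ and contracting with $\dif\varphi(e_i)$, into the $M$-contribution $\sum_i\lambda_i^2 h(\dif\varphi(\Ric^M e_i),\dif\varphi(e_i))$ (from $\sum_i R^M(e_i,\cdot)e_i=-\Ric^M(\cdot)$) and the $N$-contribution $-\sum_i\lambda_i^2\Ric^\varphi(\dif\varphi(e_i),\dif\varphi(e_i))$ (from the symmetries of $R^N$ and $R^\varphi=\varphi^*R^N$). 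The remaining piece $\sum_i\lambda_i^2 h(\nabla^\varphi_{e_i}\tau(\varphi),\dif\varphi(e_i))$ I would recognise as a Leibniz expansion: since $g(\mathfrak{C}_\varphi(\dif\varphi^t(\tau(\varphi))),e_i)=\lambda_i^2 h(\tau(\varphi),\dif\varphi(e_i))$, moving one derivative off $\tau(\varphi)$ turns this into $\di\big(\mathfrak{C}_\varphi(\dif\varphi^t(\tau(\varphi)))\big)$ minus the terms in which the derivative instead lands on the $\mathfrak{C}_\varphi$ and $\dif\varphi$ factors, and the latter assemble precisely into $h(\tau(\varphi),\tr\nabla(\dif\varphi\circ\mathfrak{C}_\varphi))$. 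Collecting all contributions and dividing by $2$ gives the stated formula.

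The main obstacle is bookkeeping rather than conceptual. The delicate points are: extracting the curvature operators on $M$ and on $N$ from the commutation step with the correct signs relative to the paper's conventions for $R$, $\Ric^M$ and $\Ric^\varphi$; checking that the several quadratic-in-$\nabla\dif\varphi$ expressions arising from the two differentiations of Lemma \ref{magic} really do reorganise into just $\tfrac12\abs{\nabla\varphi^* h}^2$ and $\sum_i\lambda_i^2\abs{\nabla\dif\varphi(e_i,\cdot)}^2$ with no residue once the eigenframe relation $h(\dif\varphi(e_i),\dif\varphi(e_j))=\lambda_i^2\delta_{ij}$ is used; and verifying the last identification, i.e.\ that what is left after extracting $\di(\mathfrak{C}_\varphi(\dif\varphi^t(\tau(\varphi))))$ is exactly $-h(\tau(\varphi),\tr\nabla(\dif\varphi\circ\mathfrak{C}_\varphi))$ and not some other second-order expression.
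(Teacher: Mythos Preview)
The paper does not actually prove this lemma: it is quoted from \cite{naka}, and the only comment the paper adds is that the identity ``is obtained by direct computation and not by deriving it from the general Weitzenb\"ock formula [\dots] applied to $\dif\varphi\circ\mathfrak{C}_\varphi$.'' Your proposal is exactly such a direct computation---you run the Bochner technique on the symmetric $2$-tensor $\varphi^*h$ rather than on the $1$-form $\dif\varphi\circ\mathfrak{C}_\varphi$---so your approach matches what the paper indicates.

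The outline itself is sound. Starting from $\tfrac12\Delta\abs{\varphi^*h}^2=\abs{\nabla\varphi^*h}^2+\langle\tr\nabla^2(\varphi^*h),\varphi^*h\rangle$, differentiating Lemma~\ref{magic} once more and pairing with $\varphi^*h$ in an eigenframe indeed produces a quadratic-in-$\nabla\dif\varphi$ contribution equal to $2\sum_i\lambda_i^2\abs{\nabla\dif\varphi(e_i,\cdot)}^2$, so after the final halving the first line of the formula comes out. Your use of the Ricci identity $(\nabla_X\nabla\dif\varphi)(Y,Z)-(\nabla_Y\nabla\dif\varphi)(X,Z)=R^N(\dif\varphi X,\dif\varphi Y)\dif\varphi Z-\dif\varphi(R^M(X,Y)Z)$, together with $\sum_k R^M(e_k,\cdot)e_k=-\Ric^M$, gives the curvature line with the paper's signs. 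The Leibniz identification of $\sum_i\lambda_i^2 h(\nabla^\varphi_{e_i}\tau(\varphi),\dif\varphi(e_i))$ with $\di\big(\mathfrak{C}_\varphi(\dif\varphi^t\tau(\varphi))\big)-h(\tau(\varphi),\tr\nabla(\dif\varphi\circ\mathfrak{C}_\varphi))$ is correct, since $g(\mathfrak{C}_\varphi(\dif\varphi^t\tau(\varphi)),e_i)=\lambda_i^2 h(\tau(\varphi),\dif\varphi(e_i))$ and the derivative landing on $\lambda_i^2\dif\varphi(e_i)$ is precisely the $e_i$-component of $\tr\nabla(\dif\varphi\circ\mathfrak{C}_\varphi)$. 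The caveats you list are the right ones; they are bookkeeping, not gaps.
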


This identity is obtained by direct computation and not by deriving it from the general Weitzenb\"ock formula \cite[(1.34)]{eel} applied to $\dif \varphi \circ \mathfrak{C}_\varphi$. By combining it with the Weitzenb\"ock formula used in the regularity theory of $p$-harmonic maps ($p=4$) \cite{xinp} we can obtain a Weitzenb\"ock formula suited for $\sigma_2$-critical maps.

We are now ready to prove the following stability property of $\sigma_2$-energy.

\begin{pr}
If $m > 4$, then there is no stable $\sigma_2$-critical map of rank $ \geq 2$ from $\Ss^m$ to any Riemannian manifold $N^n$ $(n\geq 2)$.
\end{pr}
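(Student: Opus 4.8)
The plan is to repeat, as closely as possible, the Lawson--Simons averaging argument of Remark \ref{av} used in the symplectic case, now applied to the second variation \eqref{sigma_2hes} with the very same test fields $v_\alpha=\dif\varphi(\gr f_\alpha)$, $\alpha=1,\dots,m+1$. The first step is to average each ingredient of \eqref{sigma_2hes}. From \eqref{nabgr} one gets $\nabla^\varphi_X v_\alpha=\nabla\dif\varphi(X,\gr f_\alpha)-f_\alpha\dif\varphi(X)$; by symmetry of $\nabla\dif\varphi$ together with Lemma \ref{magic} this gives $H_{v_\alpha}(X,Y)=(\nabla_{\gr f_\alpha}\varphi^*h)(X,Y)-2f_\alpha\,\varphi^*h(X,Y)$, which is precisely the $\sigma_2$-counterpart of the identity for $\dif(\varphi^*\imath_{v_\alpha}\Omega)$ in the previous section, whence $\di^\varphi v_\alpha=\tfrac12\gr f_\alpha(\abs{\dif\varphi}^2)-f_\alpha\abs{\dif\varphi}^2$. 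Squaring and summing over $\alpha$ with $\sum_\alpha f_\alpha^2=1$, $\sum_\alpha f_\alpha\gr f_\alpha=0$ and $\sum_\alpha g(X,\gr f_\alpha)g(Y,\gr f_\alpha)=g(X,Y)$ turns every term of \eqref{sigma_2hes} into a pointwise expression in $\abs{\dif\varphi}^2$, $\abs{\varphi^*h}^2$, $\abs{\nabla\dif\varphi}^2$, $\abs{\nabla\varphi^*h}^2$, $\abs{\gr\abs{\dif\varphi}^2}^2$ and the curvature contractions $\Ric^\varphi$; using $\abs{\dif\varphi}^4-\abs{\varphi^*h}^2=2\sigma_2(\varphi^*h)$ one then arrives at $\sum_\alpha\mathrm{Hess}_\varphi^{\mathcal{E}_{\sigma_2}}(v_\alpha,v_\alpha)$ equal to $\int_{\Ss^m}6\,\sigma_2(\varphi^*h)\,\nu_{can}$ plus an integral of gradient-, second-fundamental-form- and curvature-terms still to be disposed of.

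The second step eliminates those remaining terms by two Weitzenb\"ock identities. I would integrate Lemma \ref{boc} over $\Ss^m$ (so that $\int\Delta(\cdot)=0$ and $\Ric^{\Ss^m}=(m-1)g$) and, separately, integrate the Bochner identity for $\abs{\dif\varphi}^4$ obtained from the Weitzenb\"ock formula for $p$-harmonic maps ($p=4$) of \cite{xinp} (equivalently, from $\tfrac12\Delta\abs{\dif\varphi}^4=\abs{\dif\varphi}^2\Delta\abs{\dif\varphi}^2+\abs{\gr\abs{\dif\varphi}^2}^2$ combined with the Bochner formula for $\abs{\dif\varphi}^2$). In both, the non-harmonicity is absorbed through the $\sigma_2$ Euler--Lagrange equation, which reads $\tr\nabla(\dif\varphi\circ\mathfrak{C}_\varphi)=\tr\nabla(\abs{\dif\varphi}^2\dif\varphi)=\dif\varphi(\gr\abs{\dif\varphi}^2)+\abs{\dif\varphi}^2\tau(\varphi)$; substituting this and integrating by parts rewrites the tension-field terms of both formulas as one and the same combination of $\langle\tau(\varphi),\dif\varphi(\gr\abs{\dif\varphi}^2)\rangle$ and $\abs{\dif\varphi}^2\abs{\tau(\varphi)}^2$. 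Adding the two integral identities to the expression of the first step with suitable weights (weight $1$ for the first, $-\tfrac12$ for the second) then cancels $\abs{\nabla\dif\varphi}^2$, $\abs{\nabla\varphi^*h}^2$, $\abs{\gr\abs{\dif\varphi}^2}^2$, all the $\Ric^\varphi$-contractions and all the tension-field terms, leaving only multiples of $\abs{\dif\varphi}^4$ and $\abs{\varphi^*h}^2$; collecting these again via $\abs{\dif\varphi}^4-\abs{\varphi^*h}^2=2\sigma_2(\varphi^*h)$ should give
\begin{equation*}
\sum_\alpha\mathrm{Hess}_\varphi^{\mathcal{E}_{\sigma_2}}(v_\alpha,v_\alpha)=2(4-m)\int_{\Ss^m}\sigma_2(\varphi^*h)\,\nu_{can},
\end{equation*}
which is exactly the form of the symplectic Dirichlet result, as anticipated.

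Finally, if $\varphi$ has rank $\geq 2$ at some point of $\Ss^m$, then $\sigma_2(\varphi^*h)\geq 0$ everywhere and is strictly positive on a non-empty open set, so the integral above is positive; since $m>4$ the left-hand side is negative, hence $\mathrm{Hess}_\varphi^{\mathcal{E}_{\sigma_2}}(v_\alpha,v_\alpha)<0$ for at least one $\alpha$, and $\varphi$ cannot be stable. The main obstacle is this second step: because $\varphi$ is only $\sigma_2$-critical and not harmonic, each of the three formulas involved (the averaged Hessian, Lemma \ref{boc}, and the $\abs{\dif\varphi}^4$-Bochner identity) separately carries divergence terms and tension-field terms that do not vanish, and the whole computation rests on checking that --- once the Euler--Lagrange equation is used to replace $\tr\nabla(\dif\varphi\circ\mathfrak{C}_\varphi)$ --- these "respective terms" match and telescope to zero upon integration over the closed manifold $\Ss^m$; apart from this bookkeeping, everything reduces to a routine, if lengthy, application of the identities in Remark \ref{av} and Lemma \ref{magic}.
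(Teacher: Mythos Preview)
Your proposal is correct and follows essentially the same route as the paper: the same test fields $v_\alpha=\dif\varphi(\gr f_\alpha)$, the identity $H_{v_\alpha}=(\nabla_{\gr f_\alpha}\varphi^*h)-2f_\alpha\,\varphi^*h$ via Lemma~\ref{magic}, the combination of the $p=4$ Bochner formula with the Weitzenb\"ock identity of Lemma~\ref{boc}, and the cancellation of the $h(\tau(\varphi),\dots)$ terms through the $\sigma_2$-Euler--Lagrange equation, leading to $\sum_\alpha\mathrm{Hess}_\varphi^{\mathcal{E}_{\sigma_2}}(v_\alpha,v_\alpha)=2(4-m)\int_{\Ss^m}\sigma_2(\varphi^*h)\,\nu_{can}$. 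The paper organizes the computation by treating the ``$4$-harmonic'' and ``$\mathcal{G}$'' blocks of \eqref{sigma_2hes} separately rather than averaging everything first and then applying the two Weitzenb\"ock formulas with fixed weights, but this is only a difference in bookkeeping.
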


\begin{proof}
Let $\varphi:\mathbb{S}^{m} \to N$ be a smooth $\sigma_2$-critical map and $v_\alpha=\dif \varphi(\gr f_\alpha)$, $\alpha=1,...,m+1$ be defined in Remark \ref{av}. For the first two terms in \eqref{sigma_2hes}, a computation corresponding to the 4-harmonic case yields
\begin{equation*}
\begin{split}
&\sum_\alpha 2(\di^\varphi v_\alpha)^2 + \abs{\dif \varphi}^2\left(\abs{\nabla^\varphi v_\alpha}^2 - \Ric^\varphi(v_\alpha,v_\alpha)\right)\\
&=(4-m)\abs{\dif \varphi}^4
+h\left(\tau(\varphi), \abs{\dif \varphi}^2\tau(\varphi)+\dif \varphi(\gr \abs{\dif \varphi}^2)\right)+\di(\dots).
\end{split}
\end{equation*}
For the third term in \eqref{sigma_2hes}, by using Lemma \ref{magic} we obtain 
\begin{equation*}
\sum_\alpha \tfrac{1}{2}\abs{H_{v_\alpha}}^2=\tfrac{1}{2}\abs{\nabla \varphi^* h}^2 + 2\sum_i \lambda_i^4.
\end{equation*}
Finally, we directly check that
$$
\sum_{i}\lambda_i^2 \abs{\nabla \dif \varphi(e_i, \cdot)}^2=
\sum_{i, \alpha}\lambda_i^2 \abs{\nabla_{e_i}^{\varphi} \dif \varphi(\gr f_\alpha)}^2-
\sum_i \lambda_i^4,
$$
which, combined with the Weitzenb\"ock formula (Lemma \ref{boc}), yields
\begin{equation*}
\begin{split}
&\sum_{i,\alpha}\lambda_i^2\left[\abs{\nabla_{e_i}^\varphi v_\alpha}^2 - h\left( R^N(v_\alpha,\dif \varphi(e_i))\dif \varphi(e_i), v_\alpha \right) \right]\\
&= -\tfrac{1}{2}\abs{\nabla \varphi^* h}^2 +(2-m)\sum_i \lambda_i^4 + h\left(\tau(\varphi), \tr \nabla(\dif \varphi \circ \mathfrak{C}_\varphi)\right)+\di(\dots).
\end{split}
\end{equation*}

Inserting all in the Hessian formula \eqref{sigma_2hes} and noticing that the $\sigma_2$-Euler-Lagrange equations satisfied by $\varphi$ assure the cancellation of $h(\tau(\varphi), \dots)$, we obtain
\begin{equation*}
\begin{split}
\sum_\alpha \mathrm{Hess}_{\varphi}^{\mathcal{E}_{\sigma_2}}(v_\alpha, v_\alpha)=2(4-m) \int_{\Ss^m}\sigma_2(\varphi^*h) \nu_{can}
\end{split}
\end{equation*}
and, since $\sigma_2(\varphi^*h)\geq 0$ with equality iff $\rank \dif \varphi_x <2$ for all $x \in \Ss^m$, the conclusion follows.
\end{proof}

Starting from Equation \eqref{sigma_2hes} and applying the averaging argument (Remark \ref{av}) with $v_\alpha=(\gr f_\alpha)\circ \varphi$ yields
\begin{lm}[Stability inequality]For any stable $\sigma_2$-critical map $\varphi:M \to \Ss^n$ the folowing inequality holds
\begin{equation*}
\int_M \big\{(n-1)\left(\abs{\gr f}^2\abs{\dif \varphi}^2 -\abs{\dif \varphi (\gr f)}^2 \right) + 2(4-n)f^2 \sigma_2(\varphi^* h)\big\}\nu_g \geq 0,
\end{equation*}
where $f$ is a smooth function with compact support on $M$.
\end{lm}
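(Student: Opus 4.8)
The plan is to carry out the target-sphere counterpart of the averaging argument of Remark~\ref{av} and feed the resulting variations into the second variation formula \eqref{sigma_2hes}. Fix an orthonormal basis $(a_\alpha)_{\alpha=1,\dots,n+1}$ of $\RR^{n+1}\supset\Ss^n$, set $f_\alpha\colon\Ss^n\to\RR$, $f_\alpha(y)=\langle a_\alpha,y\rangle$, and take as variation fields $v_\alpha=f\,(\gr f_\alpha)\circ\varphi\in\Gamma(\varphi^{-1}T\Ss^n)$, where $f$ is the given compactly supported function on $M$ (so that the $v_\alpha$ are admissible test fields even when $M$ is noncompact). As in Remark~\ref{av}, on $\Ss^n$ one has $\sum_\alpha f_\alpha^2=1$, $\nabla^{\Ss^n}_W\gr f_\alpha=-f_\alpha W$, and $W=\sum_\alpha\langle W,\gr f_\alpha\rangle\gr f_\alpha$ for $W\in T\Ss^n$, whence $\sum_\alpha\langle W,\gr f_\alpha\rangle^2=\abs{W}^2$. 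Differentiating, $\nabla^\varphi_X v_\alpha=(Xf)\,(\gr f_\alpha)\circ\varphi-f\,(f_\alpha\circ\varphi)\,\dif\varphi(X)$, so the second fundamental form of $\varphi$ never appears; this is exactly what makes the target version lighter than the domain one proved above, and in particular no Weitzenb\"ock formula will be needed.

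The computational core is to expand, summing over $\alpha$, each of the six ingredients of \eqref{sigma_2hes}, using the identities above, the orthogonality $\dif\varphi(e_i)\perp\varphi$ (valid since $\varphi$ maps into $\Ss^n$), and the curvature tensor of the unit sphere $R^{\Ss^n}(X,Y)Z=\langle Y,Z\rangle X-\langle X,Z\rangle Y$. With $\{e_i\}$ the $\varphi^*h$-eigenframe and $\phi_\alpha:=f_\alpha\circ\varphi$, the simplifications that recur are $\sum_\alpha\phi_\alpha\,\dif\phi_\alpha=\tfrac12\dif\bigl(\sum_\alpha\phi_\alpha^2\bigr)=0$, $\sum_\alpha\abs{(\gr f_\alpha)\circ\varphi}^2=n$ (from $\sum_\alpha f_\alpha^2=1$), and $\sum_\alpha\abs{\gr\phi_\alpha}^2=\abs{\dif\varphi}^2$ (the identity $\sum_\alpha\langle W,\gr f_\alpha\rangle^2=\abs{W}^2$ applied with $W=\dif\varphi(e_i)$ and summed over $i$). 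I expect this to produce, pointwise, $\sum_\alpha\abs{\nabla^\varphi v_\alpha}^2=n\abs{\gr f}^2+f^2\abs{\dif\varphi}^2$, $\sum_\alpha(\di^\varphi v_\alpha)^2=\abs{\dif\varphi(\gr f)}^2+f^2\abs{\dif\varphi}^4$, $\sum_\alpha\Ric^\varphi(v_\alpha,v_\alpha)=(n-1)f^2\abs{\dif\varphi}^2$, $\sum_\alpha\abs{H_{v_\alpha}}^2=2\abs{\gr f}^2\abs{\dif\varphi}^2+2\abs{\dif\varphi(\gr f)}^2+4f^2\sum_i\lambda_i^4$, $\sum_{i,\alpha}\lambda_i^2\abs{\nabla^\varphi_{e_i}v_\alpha}^2=n\abs{\dif\varphi(\gr f)}^2+f^2\sum_i\lambda_i^4$, and $\sum_{i,\alpha}\lambda_i^2\langle R^{\Ss^n}(v_\alpha,\dif\varphi(e_i))\dif\varphi(e_i),v_\alpha\rangle=(n-1)f^2\sum_i\lambda_i^4$.

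Substituting these into \eqref{sigma_2hes}, summing over $\alpha$ and collecting terms, the $\abs{\dif\varphi}^4$ and $\sum_i\lambda_i^4$ contributions combine via $\abs{\dif\varphi}^4-\sum_i\lambda_i^4=2\sigma_2(\varphi^*h)$, leaving
\begin{equation*}
\sum_\alpha\mathrm{Hess}_{\varphi}^{\mathcal{E}_{\sigma_2}}(v_\alpha,v_\alpha)=\int_M\bigl\{(n-1)\bigl(\abs{\gr f}^2\abs{\dif\varphi}^2-\abs{\dif\varphi(\gr f)}^2\bigr)+2(4-n)f^2\sigma_2(\varphi^*h)\bigr\}\nu_g.
\end{equation*}
Since $\varphi$ is stable and each $v_\alpha$ has compact support, the left-hand side is nonnegative, which is the asserted inequality.

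The only real work is the term-by-term averaging in the middle step: each summand of \eqref{sigma_2hes} has to be expanded against the eigenframe and the sphere curvature and then summed over $\alpha$, tracking the quadratic identity $\sum_\alpha\langle W,\gr f_\alpha\rangle^2=\abs{W}^2$ applied with $W=\dif\varphi(\gr f)$ and with $W=\dif\varphi(e_i)$. The thing I would double-check is that, unlike in the domain-sphere argument above (where divergence terms appear but integrate to zero over $\Ss^m$), here the cross terms all cancel pointwise thanks to $\sum_\alpha\phi_\alpha\,\dif\phi_\alpha=0$, so no divergence terms arise; and even if one did, it would integrate away since $f$ is compactly supported. The Euler-Lagrange equations should not be needed again, as \eqref{sigma_2hes} is already the Hessian at a $\sigma_2$-critical map.
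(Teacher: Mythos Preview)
Your proposal is correct and follows exactly the route the paper indicates (averaging $\mathrm{Hess}_{\varphi}^{\mathcal{E}_{\sigma_2}}$ over $v_\alpha=f\,(\gr f_\alpha)\circ\varphi$); you have simply supplied the term-by-term computations that the paper omits, and they check out. The only minor discrepancy is that the paper's one-line hint writes $v_\alpha=(\gr f_\alpha)\circ\varphi$ without the cutoff $f$, but your inclusion of $f$ is clearly what is needed to produce the stated inequality.
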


Letting $f=1$ gives the non-existence result analogous to \cite{le, take, xinp}, 

\begin{pr}
If $n > 4$, then there is no stable $\sigma_2$-critical map of rank $ \geq 2$ from any compact Riemannian manifold $M^m$  $(m \geq 2)$ into $\Ss^n$.
\end{pr}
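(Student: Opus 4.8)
The plan is to apply the Stability inequality lemma with the simplest admissible test function, namely the constant $f \equiv 1$; this is legitimate because $M$ is compact, so $f$ automatically has compact support. With this choice $\gr f = 0$, hence both $\abs{\gr f}^2\abs{\dif \varphi}^2$ and $\abs{\dif \varphi(\gr f)}^2$ vanish identically, and the inequality collapses to
\begin{equation*}
2(4-n)\int_M \sigma_2(\varphi^* h)\,\nu_g \ \ge\ 0 .
\end{equation*}

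Next I would exploit the sign. Since $n > 4$ the factor $2(4-n)$ is strictly negative, so the displayed inequality forces $\int_M \sigma_2(\varphi^* h)\,\nu_g \le 0$. On the other hand $\sigma_2(\varphi^* h) = \sum_{i<j}\lambda_i^2\lambda_j^2 \ge 0$ pointwise, with equality at a point $x$ exactly when at most one eigenvalue $\lambda_i(x)$ is nonzero, i.e.\ when $\rank \dif \varphi_x \le 1$ --- the very characterization already used in the first $\sigma_2$-proposition of this section. Therefore $\int_M \sigma_2(\varphi^* h)\,\nu_g = 0$, and by continuity $\sigma_2(\varphi^* h)$ vanishes identically on $M$, so $\rank \dif \varphi_x < 2$ at every point, contradicting the hypothesis that $\varphi$ has rank $\ge 2$.

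Conditional on the Stability inequality lemma, this is the whole argument: it is the ``into spheres'' mirror of putting $f=1$ in the Lawson--Simons averaging scheme, exactly as for the $p$-harmonic case of \cite{le, take, xinp}, and there is no genuine obstacle to overcome. The only point deserving a word of justification is the identification of the zero locus of $\sigma_2(\varphi^* h)$ with the set where $\rank \dif \varphi < 2$, which has already been recorded above. If one instead wanted a self-contained proof, all of the difficulty would sit in establishing the Stability inequality lemma itself: one feeds the fields $v_\alpha = f\cdot\big((\gr f_\alpha)\circ\varphi\big)$ into the second variation formula \eqref{sigma_2hes}, uses the identities of Remark \ref{av} for the coordinate functions $f_\alpha$ on $\Ss^n$ together with the constant curvature of $\Ss^n$ to evaluate the curvature terms $\langle R^N(v_\alpha,\dif \varphi(e_i))\dif \varphi(e_i),v_\alpha\rangle$, and then collects the resulting expression; that computation --- not the present deduction --- is where the work lies, and the lemma already provides it.
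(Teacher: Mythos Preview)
Your argument is correct and is precisely the paper's own proof: it states the Stability inequality lemma and then simply remarks that ``letting $f=1$ gives the non-existence result analogous to \cite{le, take, xinp}.'' Your additional explanation of why $\sigma_2(\varphi^*h)=0$ forces $\rank\dif\varphi<2$ everywhere just spells out what the paper leaves implicit.
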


\subsection{Full Skyrme model} Let us consider the coupled energy
$$
\mathcal{E}(\varphi)+ \kappa \mathcal{E}_{\sigma_2}(\varphi),
$$
where $\kappa$ is a positive coupling constant. With the same argument as in the previous section, if $m\geq 4$, then there is no non-constant stable $(\mathcal{E}+\kappa\mathcal{E}_{\sigma_2})$-critical map from $\Ss^m$ to any Riemannian manifold. If $m=3$,  a necessary condition for a non-constant $(\mathcal{E}+\kappa\mathcal{E}_{\sigma_2})$-critical map $\varphi:\Ss^3 \to N$ to be stable is
$$
\kappa\geq \frac{\int_{\Ss^3}\abs{\dif \varphi}^2 \nu_{can}}{2\int_{\Ss^3}\sigma_2(\varphi^* h)\nu_{can}}
$$
For the identity map of $\Ss^3$ (of unit radius) this reads $\kappa \geq \tfrac{1}{2}$ and one knows that it is also a sufficient condition \cite{los, man}; see also \cite{slo}.

\section{Infima in homotopy classes}

In this section we point out a global analogue of the results in the previous sections.

\begin{lm} \label{ineq} $(i)$ \ Let $\varphi: (M, g) \to (N^n, \Omega, h)$ be a (smooth) map into an almost K\"ahler manifold with fundamental $2$-form $\Omega$. Then
$$
\abs{\varphi^* \Omega}^2 \leq \sigma_2(\varphi^*h) ,
$$
where the equality is reached if and only if $n=2$.

\noindent $(ii)$ \ Let $\varphi: (M, g) \to (N^n,h)$ be a (smooth) map between Riemannian manifolds. Then
$$
\sigma_2(\varphi^*h) \leq \tfrac{n-1}{2n}\abs{\dif \varphi}^4,
$$
where the equality is reached if and only if $\varphi$ is semi-conformal (i.e., the eigenvalues of $\varphi^*h$ are all equal).
\end{lm}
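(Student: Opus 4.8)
The plan is to note that both inequalities are pointwise and purely algebraic, so I would fix $x\in M$ and argue entirely inside $T_xM$. First I would pick a $g$-orthonormal basis $\{e_i\}_{i=1}^m$ of $T_xM$ that diagonalises $\varphi^*h$, writing $\varphi^*h(e_i,e_j)=\lambda_i^2\delta_{ij}$ with $\lambda_1,\dots,\lambda_r>0$ and $\lambda_{r+1}=\dots=\lambda_m=0$, where $r=\rank\dif\varphi_x\le\min(m,n)$; then $\epsilon_i:=\lambda_i^{-1}\dif\varphi(e_i)$, $i=1,\dots,r$, is an $h$-orthonormal family in $T_{\varphi(x)}N$. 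In this frame $\abs{\dif\varphi}^2=\sum_i\lambda_i^2$, $\abs{\varphi^*h}^2=\sum_i\lambda_i^4$ and $\sigma_2(\varphi^*h)=\sum_{i<j}\lambda_i^2\lambda_j^2$, the last sum effectively running over $i<j\le r$.

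For part $(ii)$ I would invoke the identity $2\sigma_2(\varphi^*h)=\abs{\dif\varphi}^4-\abs{\varphi^*h}^2$ already observed in the text to see that the assertion is equivalent to $\bigl(\sum_i\lambda_i^2\bigr)^2\le n\sum_i\lambda_i^4$. Since at most $n$ of the numbers $\lambda_i^2$ are non-zero, this is exactly the Cauchy--Schwarz (power-mean) inequality for those $r\le n$ terms, with equality forcing $r=n$ and $\lambda_1^2=\dots=\lambda_n^2$; that is, $\varphi$ semi-conformal (of maximal rank at $x$, the case $r=0$ being a trivial equality as well).

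For part $(i)$ I would introduce $J_{ij}:=h(\epsilon_i,J\epsilon_j)$ for $i,j\le r$. Since $J$ is an $h$-isometry with $J^{2}=-\mathrm{Id}$, the array $(J_{ij})$ is skew-symmetric and $\abs{J_{ij}}\le\abs{\epsilon_i}_h\,\abs{J\epsilon_j}_h=1$ by Cauchy--Schwarz in $(T_{\varphi(x)}N,h)$. A one-line computation gives $\varphi^*\Omega(e_i,e_j)=h(\dif\varphi(e_i),J\dif\varphi(e_j))=\lambda_i\lambda_j J_{ij}$, whence
\begin{equation*}
\abs{\varphi^*\Omega}^2=\sum_{i<j}\bigl(\varphi^*\Omega(e_i,e_j)\bigr)^2=\sum_{i<j\le r}\lambda_i^2\lambda_j^2 J_{ij}^2\le\sum_{i<j}\lambda_i^2\lambda_j^2=\sigma_2(\varphi^*h).
\end{equation*}
For the equality clause I would observe that equality needs $J_{ij}^2=1$ whenever $\lambda_i\lambda_j\ne0$, i.e. $\epsilon_i=\pm J\epsilon_j$ for all such $i<j$; orthonormality of the $\epsilon_i$ then forces $r\le2$, and if $r=2$ it forces $\dif\varphi_x(T_xM)$ to be a $J$-invariant plane. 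When $n=2$ these constraints hold automatically at every point (if $r=2$ the image is all of $T_{\varphi(x)}N$, hence $J$-invariant; if $r\le1$ both sides vanish), so the inequality is an identity, whereas for $n>2$ one exhibits maps whose image fails to be $J$-invariant somewhere, for which the inequality is strict.

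I do not anticipate a genuine obstacle: once the singular-value normal form of $\dif\varphi_x$ is in place, both statements reduce to Cauchy--Schwarz. The only points deserving care are the bookkeeping of the vanishing eigenvalues and, in $(ii)$, the use of the sharp bound $r\le n$ (not $r\le m$), which is precisely what produces the dimensional constant $\tfrac{n-1}{2n}$; and, in $(i)$, reading the equality clause as a statement about \emph{every} map into $N$ rather than about a fixed $\varphi$, the pointwise equality locus being the set where $\dif\varphi$ has $J$-invariant image of rank at most $2$.
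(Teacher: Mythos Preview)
Your argument is correct and essentially identical to the paper's for part~$(i)$: the paper also diagonalises $\varphi^*h$ and bounds each term $h(\dif\varphi(e_i),J\dif\varphi(e_j))^2$ by $\abs{\dif\varphi(e_i)}^2\abs{\dif\varphi(e_j)}^2$ via Cauchy--Schwarz. For part~$(ii)$ the paper just writes ``this is one of Newton's inequalities'' (namely $\sigma_2\le\tfrac{n-1}{2n}\sigma_1^2$ for $n$ nonnegative reals), which is exactly the content of your Cauchy--Schwarz reduction through $2\sigma_2=\abs{\dif\varphi}^4-\abs{\varphi^*h}^2$; your discussion of the equality clauses is in fact more thorough than the paper's.
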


\begin{proof}
$(i)$ Let $\{e_i\}_{i=1,...,m}$ be a (local) orthonormal frame of eigenvectors of $\varphi^*h$. Applying Cauchy inequality $\abs{\varphi^* \Omega}^2 =\sum_{i<j}h(\dif \varphi(e_i), J\dif \varphi(e_j))^2$ $\leq
\sum_{i<j}\abs{\dif \varphi(e_i)}^2\abs{\dif \varphi(e_j)}^2=\sum_{i<j}\lambda_i^2 \lambda_j^2
= \sigma_2(\varphi^*h)$, gives us the result. 

$(ii)$ This is one of the Newton's inequalities.
\end{proof}

Since, by \cite{wei, whi}, the infimum of the 4-energy in each homotopy class of mappings from (or into) a sphere of dimension greater than 4 is zero, Lemma \ref{ineq} implies the following

\begin{pr}
If $m > 4$, then the infimum of the symplectic Dirichlet energy and of $\sigma_2$-energy in any homotopy class of maps $\Ss^m \to N$ or $M \to \Ss^m$ ($M,N$ compact) is zero.
\end{pr}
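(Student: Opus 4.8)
The plan is to dominate both $\mathcal{F}$ and $\mathcal{E}_{\sigma_2}$ by a fixed multiple of the $4$-energy $\mathcal{E}_4(\varphi)=\tfrac14\int\abs{\dif\varphi}^4\nu_g$ using Lemma \ref{ineq}, and then to invoke the fact recalled just before the statement — due to \cite{wei, whi} — that in every homotopy class of maps from (or into) a sphere of dimension $>4$ the infimum of $\mathcal{E}_4$ vanishes.

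First I would record the pointwise consequence of Lemma \ref{ineq}. Chaining parts $(i)$ and $(ii)$ gives, for a map $\varphi$ into an almost K\"ahler $N^n$ (or, dropping the first inequality, into any Riemannian $N^n$),
\begin{equation*}
\abs{\varphi^*\Omega}^2 \ \leq\ \sigma_2(\varphi^*h) \ \leq\ \tfrac{n-1}{2n}\,\abs{\dif\varphi}^4 .
\end{equation*}
Integrating over the compact domain and using the normalizations $\mathcal{F}(\varphi)=\tfrac12\int\abs{\varphi^*\Omega}^2\nu_g$, $\mathcal{E}_{\sigma_2}(\varphi)=\tfrac12\int\sigma_2(\varphi^*h)\nu_g$, $\mathcal{E}_4(\varphi)=\tfrac14\int\abs{\dif\varphi}^4\nu_g$, this becomes
\begin{equation*}
\mathcal{F}(\varphi) \ \leq\ \mathcal{E}_{\sigma_2}(\varphi) \ \leq\ \tfrac{n-1}{n}\,\mathcal{E}_4(\varphi).
\end{equation*}

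Next, I would fix a homotopy class of maps $\Ss^m\to N$ (resp.\ $M\to\Ss^m$) with $m>4$ and choose, by \cite{wei, whi}, a sequence $\varphi_k$ in that class with $\mathcal{E}_4(\varphi_k)\to 0$; the displayed inequalities then give $\mathcal{E}_{\sigma_2}(\varphi_k)\to 0$ and $\mathcal{F}(\varphi_k)\to 0$. Since $\sigma_2(\varphi^*h)\geq 0$ and $\abs{\varphi^*\Omega}^2\geq 0$ pointwise, both functionals are non-negative, so their infima over the class are $\leq 0$, hence equal to $0$. In the direction $M\to\Ss^m$ the clause about $\mathcal{F}$ is of course to be read only for those sphere targets (if any, for $m>4$) for which $\mathcal{F}$ is defined; the estimate itself does not see this.

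Finally, regarding difficulty: essentially all the content is imported from the two cited results, so there is no genuine obstacle. The only points requiring a moment's attention are keeping the constants $\tfrac12,\tfrac14,\tfrac{n-1}{2n}$ straight when passing between the three functionals, and the routine observation that the $\mathcal{E}_4$-minimizing sequence produced in \cite{wei, whi} can be taken smooth (or replaced by smooth maps in the same homotopy class whose $4$-energies still tend to $0$), so that it is admissible in the present, smooth, setting.
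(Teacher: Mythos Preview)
Your proposal is correct and is precisely the argument the paper gives: the proposition is stated as an immediate consequence of Lemma~\ref{ineq} together with the cited result of \cite{wei, whi} on the vanishing of the $4$-energy infimum. The only addition in the paper is that, after this deduction, it supplies a self-contained elementary proof for the $\Ss^m\to N$ direction by exhibiting the explicit one-parameter family of conformal dilations $\phi_c$ of $\Ss^m$ and computing directly that $\mathcal{E}_4(\varphi\circ\phi_c)\to 0$ as $c\to 0$.
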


We include here an elementary proof for the first part of the result, analogous to the Dirichlet energy case \cite{eel, eell}. If a homotopy class of mappings $M \to \Ss^m$ contains a Riemannian submersion, the proof of the second statement is similar.

\begin{proof}
Let $m \geq 5$, $c>0$ and $\phi_c$ be defined (by suspension) between charts of $\Ss^m$ as
$$
(\cos s, \sin s \cdot z) \mapsto (\cos \alpha(s), \sin \alpha(s) \cdot z); \quad \alpha(s)=2\arctan(c \tan(\tfrac{s}{2})),
$$
where $0\leq s < \pi$ and $z \in \Ss^{m-1}$. Notice that this defines indeed a smooth map $\phi_c:\Ss^m \to \Ss^m$ (regular at the poles), which has topological degree 1, and is conformal of dilation
$$
\lambda^2=\frac{c(1+\tan^2(\tfrac{s}{2}))}{1+c^2\tan^2(\tfrac{s}{2})},
$$
where we considered $\Ss^m$ endowed with the canonical metric (we can show as in \cite{eel} that  the statement we wish to prove is independent of the choices of metrics on the domain or codomain).  Since $m \geq 5$,
\begin{equation*}
\begin{split}
\mathcal{E}_4(\phi_c) &= \frac{\vol(\Ss^{m-1})}{4}\int_{0}^{\pi} \lambda^4 \sin^{m-1}s \, \dif s\\
&=
\frac{\vol(\Ss^{m-1})}{4}\int_{0}^{\pi}\left(\frac{2c\tan(\tfrac{s}{2})}{1+c^2\tan^2(\tfrac{s}{2})}\right)^4\sin^{m-5}s \, \dif s\\
&\leq
\frac{\vol(\Ss^{m-1})}{4}\int_{0}^{\pi}\left(\frac{2c\tan(\tfrac{s}{2})}{1+c^2\tan^2(\tfrac{s}{2})}\right)^4 \dif s=
\frac{\vol(\Ss^{m-1})\pi c(c^2+4c+1)}{4(c+1)^4},
\end{split}
\end{equation*}
so $\lim_{c\to 0}\mathcal{E}_4(\phi_c)=0$.

Now let $N$ be a compact manifold and $\varphi:\Ss^m \to N$. Then $\varphi_c = \varphi \circ \phi_c$ is homotopic with $\varphi$. By an elementary (algebraic) property of Hilbert-Schmidt norm,
$$
\abs{\dif \varphi_c}^4 \leq \abs{\dif \varphi}^4\abs{\dif \phi_c}^4,
$$
so we can conclude that $\lim_{c\to 0}\mathcal{E}_4(\varphi_c) = 0$. Combining with Lemma \ref{ineq} allows us to conclude that the infimum of the symplectic Dirichlet energy and of $\sigma_2$-energy in the homotopy class of $\Ss^m \to N$ is zero. 
\end{proof}

\section{Final remarks}
In this note we restricted to the $\sigma_2$ and symplectic Dirichlet energies since they correspond to Lagrangians which are at most
quadratic in first time derivatives (a requirement for any field theory with standard Hamiltonian). Nevertheless the results here should have straightforward extensions to other higher power functionals as $\sigma_p$.
Also we discussed only the sphere case, but we had in mind that the same phenomena should occur not only on product of spheres but also on other symmetric spaces as it was proved for ($p$-)harmonic maps (\cite{ohn, mont, wei}). Most notably it would be interesting to find the stability properties of the symplectic Dirichlet energy for maps defined on a complex projective space. Direct application of the averaging argument suited to $\CC P^m$ (\cite{oh}) has failed to provide us with an effective criterion of stability. In this case the use of a different basis of vectors for the averaged Hessian seems to impose (most probably symplectic vectors).
 

\end{document}